\theoremstyle{plain}
\newtheorem{thm}{\protect\theoremname}
\theoremstyle{remark}
\theoremstyle{plain}
\newtheorem{defn}{\protect\definitionname}
\newtheorem{prop}[thm]{\protect\Propname}
\newtheorem{remark}{Remark}
\providecommand{\claimname}{Claim}
\providecommand{\theoremname}{Theorem}
\providecommand{\lemmaname}{Lemma}
\providecommand{\Propname}{Proposition}
\providecommand{\corollaryname}{Corollary}
\providecommand{\definitionname}{Definition}
\DeclareMathOperator*{\argmin}{arg\,min}
\newcommand{\srf}{\textrm{SRF}} 
\newcommand{\SRF}{\srf}
\newcommand{\mm}{\Lambda} % minimax
\begin{document}

\title{Decimated Prony's method for stable super-resolution}

\author{Rami Katz, Nuha Diab, and Dmitry Batenkov
\thanks{Submitted for review on March 24th, 2023. N.~Diab and D.~Batenkov are supported by the Israel Science Foundation Grant 1793/20 and a collaborative grant from the Volkswagen Foundation.}
\thanks{R.~Katz is with the School of Electrical Engineering, Tel Aviv University, Tel Aviv, Israel (e-mail: ramikatz@mail.tau.ac.il). N.~Diab and D.~Batenkov are with the Department of Applied Mathematics, School of Mathematical Sciences, Tel Aviv University, Tel Aviv, Israel (e-mail: nuhadiab@tauex.tau.ac.il, dbatenkov@tauex.tau.ac.il).}}

\markboth{Preprint}
{Katz \MakeLowercase{\textit{et al.}}: Decimated Prony's method for stable super-resolution}

\maketitle

\begin{abstract}

    We study recovery of amplitudes and nodes of a finite impulse train from  noisy frequency samples. This problem is known as super-resolution under sparsity constraints and has numerous applications. An especially challenging scenario occurs when the separation between Dirac pulses is smaller than the Nyquist-Shannon-Rayleigh limit.  Despite large volumes of research and well-established worst-case recovery bounds, there is currently no known computationally efficient method which achieves these bounds in practice.
    % Prony's method is an algebraic technique which fully recovers the  signal parameters in the absence of measurement noise, by reducing the SR problem to the solution of two linear systems, in conjunction with a root-finding step. In the presence of noise, Prony's method may experience significant loss of accuracy. 
    In this work we combine the well-known Prony's method for exponential fitting with a recently established decimation technique for analyzing the super-resolution problem in the above mentioned regime. We show that our approach attains optimal asymptotic stability in the presence of noise, and has lower computational complexity than the current state of the art methods.
\end{abstract}

\begin{IEEEkeywords}
    Prony's method, decimation, sparse super-resolution, direction of arrival, sub Nyquist sampling, finite rate of innovation
\end{IEEEkeywords}

\IEEEpeerreviewmaketitle

\section{Introduction}

\begin{comment}
  \IEEEPARstart{T}{he} (noisy) SR problem is to recover the amplitudes $\left\{\alpha_k \right\}_{k=1}^n$ and nodes $\left\{x_k\right\}_{k=1}^n$ of a finite impulse train $f(x) = \sum_{k=1}^n \alpha_k \delta(x-x_k)$ from band-limited and noisy spectral measurements
  \begin{equation}\label{eq:noisy-data}
  g(\omega)=\sum_{k=1}^n \alpha_k e^{ 2\pi\mathrm{j} x_k \omega} + e(\omega),\quad \omega \in [-\Omega, \Omega ],
  \end{equation}
   where $\Omega>0$ and $\left\|e \right\|_{\infty}\leq \epsilon$ for $\epsilon>0$.
  The SR problem belongs to a class of inverse problems with multiple applications, including parametric spectral estimation, direction of arrival estimation, finite rate of innovation sampling, time-of-flight imaging and unlimited sensing \cite{stoica2005, batenkov2013a,bhandari2016,bhandari2022,blu2008}. High-order versions of \eqref{eq:noisy-data} are considered in e.g. \cite{batenkov2022, badeau2006}.
\end{comment}

\IEEEPARstart{V}{arious} problems of signal reconstruction in multiple basic and applied settings can be reduced to recovering the amplitudes $\left\{\alpha_k \right\}_{k=1}^n$ and nodes $\left\{x_k\right\}_{k=1}^n$ of a finite impulse train $f(x) = \sum_{k=1}^n \alpha_k \delta(x-x_k)$ from band-limited and noisy spectral measurements
\begin{equation}\label{eq:noisy-data}
g(\omega)=\sum_{k=1}^n \alpha_k e^{ 2\pi\mathrm{j} x_k \omega} + e(\omega),\quad \omega \in [-\Omega, \Omega ],
\end{equation}
 where $\Omega>0$ and $\left\|e \right\|_{\infty}\leq \epsilon$ for some $\epsilon>0$. Due to its widespread applications, this problem has been studied under various guises including tauberian approximation \cite{defigueiredo1982}, parametric spectrum estimation and direction of arrival \cite{stoica2005,badeau2006}, time-delay estimation \cite{kirsteins1987}, sparse deconvolution \cite{li2000},  super-resolution (SR) \cite{donoho1992a, candes2014}  and finite-rate-of-innovation sampling \cite{vetterli2002,blu2008}. Beyond the theoretical modelling, recent advances have shown \eqref{eq:noisy-data} to be the work-horse for emerging areas such as super-resolution tomography and spectroscopy \cite{seelamantula2014,mulleti2017a}, ultra-fast time-of-flight imaging \cite{bhandari2016} and unlimited sensing \cite{bhandari2022}; in such cases, efficient and robust solutions to \eqref{eq:noisy-data} entail pushing real-world capabilities beyond the possibilities of conventional hardware. 

% \noindent{\bf Motivation. }
Despite the theoretical advances on this topic (works cited above and follow-up literature), there are still fundamental research gaps that arise due to ill-posedness and instability of \eqref{eq:noisy-data} in the presence of noise. In particular, an especially challenging regime occurs when the separation $\Delta$ between two or more nodes is smaller than the Nyquist-Shannon-Rayleigh (NSR) limit $1/\Omega$. Recently, min-max error bounds for SR in the noisy regime were derived in the case when some nodes form a dense cluster \cite{batenkov2020,batenkov2021b,batenkov2022}, establishing the fundamental limits of recovery in the SR problem (cf. \prettyref{sec:towards}). However, a tractable and provably optimal algorithm  has been missing from the literature. 

% \noindent{\bf Contributions. }
In this work we develop a reconstruction algorithm inspired by the decimation  approach \cite{batenkov2013,batenkov2013a} (cf.\cite{batenkov2018,batenkov2020,batenkov2022,batenkov2021b} and also \cite{maravic2005,briani2020}). Our procedure (\prettyref{sec:dp}) relies on sampling $g(\omega)$ at $2n$ equispaced and maximally separated frequencies, followed by solving the SR problem thus obtained by applying the classical Prony's method for exponential fitting \cite{prony1795}, whose accuracy has been recently established in \cite{katz2023}. For success of the approach, care needs to be taken to avoid  node aliasing and collisions. Inspired by \cite{batenkov2021b}, this is achieved by considering sufficiently many decimated sub-problems. The result is a 
tractable algorithm, dubbed the Decimated Prony's method (DPM), which has lower computational complexity than the well-established and frequently used ESPRIT algorithm (see e.g. \cite{li2020a}). We further provide theoretical  and numerical evidence which show that DPM achieves the optimal asymptotic stability and noise tolerance guaranteed in the literature. We believe that our results pave the way to developing robust procedures for optimal solution of problems derived from \eqref{eq:noisy-data}.

\section{Towards optimal algorithms}\label{sec:towards}
 Throughout the paper we consider the number of nodes (resp. amplitudes) $n\in \left\{1,2,\dots \right\}$ in \eqref{eq:noisy-data} to be fixed. We assume that the nodes satisfy $\left\{x_k \right\}_{k=1}^n\subseteq \left[ -\frac{1}{2},\frac{1}{2}\right]$. By rescaling the data \eqref{eq:noisy-data}, this assumption poses no loss of generality (see Section 4 in \cite{batenkov2021b}). Let $\left\{\tilde{x}_k \right\}_{k=1}^n$ and $\left\{\tilde{\alpha}_k \right\}_{k=1}^n$ be the approximated parameters obtained via a reconstruction algorithm using  the data \eqref{eq:noisy-data}. 

\begin{defn}[\cite{batenkov2021b}]
Let $\left\{(\alpha_k,x_k) \right\}_{k=1}^n\subseteq U$. Given $\epsilon>0$, the {\textbf{min-max recovery rates}} are
\begin{align*}
&\mm^{x,j}_{\epsilon, U, \Omega} = \inf_{\mathcal{A}:g\mapsto \left\{\tilde{\alpha}_j,\tilde{x}_j \right\}} \ \sup_{\left\{\alpha_j,x_j \right\}} \ \sup_{ \|e\|_{\infty}\le \epsilon} \left|x_j-\tilde{x_j}\right|,\\
&\mm^{\alpha,j}_{\epsilon, U,\Omega} = \inf_{\mathcal{A}:g\mapsto\left\{\tilde{\alpha}_j,\tilde{x}_j \right\}} \ \sup_{\left\{\alpha_j,x_j \right\}} \sup_{\|e\|_{\infty}\le \epsilon} \left|\alpha_j-\tilde{\alpha}_j\right|.
\end{align*}
Here $U$ is some fixed subset in the parameter space, whereas the infimum is over the set of all reconstruction algorithms $\mathcal{A}$ which employ the data \eqref{eq:noisy-data}.
\end{defn}
The min-max rates define the optimal recovery rates achievable by a reconstruction algorithm, in the presence of measurement noise of magnitude less than $\epsilon$ and when the node and amplitude pairs belong to $U$.

It has been well established that the difficulty of SR is related to the minimal separation between nodes, $\Delta:=\min_{s\neq k}|x_s-x_k|$. A particular case of interest, both theoretically and from an applications perspective, concerns signals whose nodes are densely clustered, i.e. $\Delta \ll \frac{1}{\Omega}$ \cite{batenkov2021,kunis2020a, lee1992, li2021, stoica1995,liu2021a}.

% The theoretical evaluation of the min-max rates is a highly challenging problem. 

% \begin{defn}[Clustered configuration]
% $\left\{x_k \right\}_{k=1}^n$ form a $(\ell,h,T,\tau,\eta)$-configuration, where $0<\tau,\eta \leq 1$, $p\geq 2$ and $0<h \leq T$, if there exists a subset of nodes $\mathcal{X}=\left\{ x_k\right\}_{k=m}^{m+\ell-1}$ such that  $\tau h \leq |x_s-x_k|\leq h, \ \forall x_k,x_s\in \mathcal{X}, \ s\neq k$, whereas $\eta T \leq |x_s-x_k|\leq T, \ \forall x_k\notin \mathcal{X}$ and $x_s$ arbitrary with $s\neq k$.
% \end{defn}

\begin{defn}
We call $\{x_k\}_{k=1}^n$ a \emph{clustered configuration} if there is a partition of the nodes into clusters such that the distances between the nodes in each cluster are on the order of $\Delta$, while the inter-cluster distances are on the order of $1/\Omega$ ({\textbf{well-separated}} clusters). \cite{batenkov2020,batenkov2021b}.
\end{defn}

\begin{thm}[\cite{batenkov2021b}]\label{thm:minmax}
Let the {\textbf{super-resolution factor}} satisfy $\srf:={1\over{\Omega\Delta}} \gtrapprox  1$, and $\{|\alpha_k|\}_{k=1}^n$ be (uniformly) bounded. Let there exist a single cluster $\mathcal{X}$ of size $1<\ell < n$, whereas all other (singleton) clusters are well-separated, 
and $\epsilon \lessapprox (\Omega\Delta)^{2\ell-1}$. Then
\begin{align*}
&\mm^{x,j}_{\epsilon, U, \Omega} \asymp
\begin{cases}
\SRF^{2\ell-2}\frac{\epsilon}{\Omega} & x_j \in \mathcal{X}, \\
{\epsilon\over\Omega} & x_j \notin \mathcal{X},
\end{cases} \\
&\mm^{\alpha,j}_{\epsilon, U,\Omega} \asymp
\begin{cases}
\SRF^{2\ell-1}  \epsilon & x_j \in \mathcal{X}, \\
\epsilon & x_j \notin \mathcal{X}.
\end{cases}
\end{align*}
Here, $\lessapprox,\gtrapprox, \asymp$ denote asymptotic inequalities/equivalence up to multiplying constants independent of $\Omega, \Delta$, $\epsilon$, while  $U$ is the set of signals satisfying the assumptions above.
\end{thm}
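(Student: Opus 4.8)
The plan is to prove the asserted equivalence $\asymp$ by establishing matching upper and lower bounds, and the first move I would make is to decouple the single cluster $\mathcal{X}$ from the remaining singleton nodes. Since the inter-cluster distances are on the order of $1/\Omega$, the exponentials attached to different clusters are nearly orthogonal over the band $[-\Omega,\Omega]$, so the far nodes act on the cluster's measurements only through a smooth, slowly varying term that can be absorbed into the $\epsilon$ budget. This isolates the cluster for the delicate analysis and immediately relegates each well-separated node to the classical regime, where a displacement $\delta x$ changes $g$ by $\asymp \alpha\,\Omega\,\delta x$ and an amplitude change by $\asymp\delta\alpha$, giving the stated singleton rates $\epsilon/\Omega$ and $\epsilon$.

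Next I would rescale the cluster to a unit-scale problem. Writing $x_k=c+\Delta\,y_k$ with $\Theta(1)$-separated $y_k$ and stripping the carrier $e^{2\pi\mathrm{j}c\omega}$, the cluster data becomes $\tilde g(\tilde\omega)=\sum_k\alpha_k e^{2\pi\mathrm{j}y_k\tau\tilde\omega}$ with $\tilde\omega=\omega/\Omega\in[-1,1]$ and small effective bandwidth $\tau:=\Omega\Delta=1/\srf$. The crucial observation is the moment picture: the order-$s$ Taylor coefficient of $\tilde g$ at the origin is proportional to $\tau^s p_s$ with $p_s=\sum_k\alpha_k y_k^s$. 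Extracting a fixed-order Taylor coefficient of a fixed-dimensional exponential sum from data on an $O(1)$ interval amplifies sup-noise by only an $O(1)$ factor, so the effective noise on $p_s$ is $\asymp\srf^{s}\epsilon$, dominated by the top moment $s=2\ell-1$.

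With this in hand the upper bound is a linearized stability analysis of a moment-based (Prony-type) estimator. Because the rescaled nodes are $\Theta(1)$-separated and the amplitudes bounded, the forward map $(\alpha_k,y_k)\mapsto(p_0,\dots,p_{2\ell-1})$ is a local diffeomorphism whose Jacobian is a non-confluent Vandermonde-type matrix with $O(1)$ condition number; hence the recovered errors obey $|\delta y_k|,|\delta\alpha_k|\lesssim\max_s\srf^{s}\epsilon=\srf^{2\ell-1}\epsilon$. Un-rescaling through $\delta x_k=\Delta\,\delta y_k$ and $\Delta=1/(\Omega\srf)$ turns the node bound into $\srf^{2\ell-2}\epsilon/\Omega$, while the amplitudes are not rescaled and keep $\srf^{2\ell-1}\epsilon$, exactly the claimed rates. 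The hypothesis $\epsilon\lessapprox\tau^{2\ell-1}$ is precisely what makes these errors $o(1)$, so the linearization is legitimate, the cluster does not disintegrate, and no node collisions occur. For the lower bound I would run this backwards: perturb the order-$(2\ell-1)$ Taylor coefficient of $\tilde g$ by $O(\epsilon)$, which by the moment picture corresponds to a moment change of size $\asymp\srf^{2\ell-1}\epsilon$ and is realizable by an admissible sup-norm-$\epsilon$ perturbation of the data; since the forward map has bounded inverse, this is attained by genuine parameter changes of the stated orders, yielding two configurations indistinguishable from the measurements, and the standard two-point argument forces the infimum over algorithms to incur at least half this gap.

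The step I expect to be the main obstacle is the sharp, two-sided control of the noise-to-moment transfer together with the inverse Prony map: one must show that the amplification is exactly $\srf^{s}$ at order $s$, that the worst direction in moment space is simultaneously realized by an admissible noise and by a true parameter perturbation, and that the nonlinear remainder is discarded using the threshold $\epsilon\lessapprox(\Omega\Delta)^{2\ell-1}$. This is where the confluent-Vandermonde/divided-difference structure must be quantified with matching constants, and locking both bounds onto the same exponents $2\ell-2$ and $2\ell-1$ is the crux of the argument.
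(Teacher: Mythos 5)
A preliminary remark: Theorem~\ref{thm:minmax} is imported from \cite{batenkov2021b}; this paper never proves it (it explicitly notes the proof is non-constructive and defers to that reference), so there is no in-paper proof to compare against, and your attempt must be judged against the known argument. Your overall architecture --- rescale the cluster, pass to moments, invert a Prony-type map, two-point lower bound --- is indeed the strategy used in the literature, but two of your steps have genuine gaps, and they sit exactly where the difficulty is concentrated. First, the decoupling step fails as stated. You claim the well-separated nodes act on the cluster's data through ``a smooth, slowly varying term that can be absorbed into the $\epsilon$ budget.'' A non-cluster node contributes $\alpha_k e^{2\pi\mathrm{j}x_k\omega}$ with $|\alpha_k|=\Theta(1)\gg\epsilon$, and since the inter-cluster separation is $\asymp 1/\Omega$, this term completes at least $\Theta(1)$ full oscillations relative to the cluster carrier over $[-\Omega,\Omega]$; it is neither slowly varying nor small. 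Near-orthogonality controls inner products, not the sup norm in which the noise budget is defined, so an $O(1)$ term cannot be absorbed into $\epsilon$. One could subtract it to accuracy $O(\epsilon)$ only if its parameters were already known to accuracy $\epsilon$ and $\epsilon/\Omega$ --- i.e., the very claim being proven; as written, the reduction is circular. In \cite{batenkov2021b} the coupling is treated jointly, via decimation, collision avoidance, and spectral analysis of the full clustered Vandermonde/Prony map; ruling out compensation between cluster errors and far-node errors is part of the content of the theorem, not a free preprocessing step.

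Second, your claim that ``extracting a fixed-order Taylor coefficient of a fixed-dimensional exponential sum from data on an $O(1)$ interval amplifies sup-noise by only an $O(1)$ factor'' is not a harmless observation --- it is the quantitative crux, and fixed dimension alone does not justify it. The admissible perturbation $e$ is an arbitrary function with $\|e\|_\infty\le\epsilon$, and differentiation is unbounded on sup-norm balls; what is actually needed is that any two signals of the model class whose data are $2\epsilon$-close in sup norm on the band have all relevant moments up to order $2\ell-1$ within $O(\epsilon)$ of each other. That is a Tur\'an--Nazarov/Bernstein-type inequality for exponential sums with few terms, which is precisely the machinery the reference develops; note also that since the two configurations' nodes need not coincide, the relevant moment structure is confluent (divided differences), and your Jacobian is a confluent --- not non-confluent --- Vandermonde matrix, whose invertibility additionally requires the amplitudes to be bounded away from zero. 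Moreover, a min-max upper bound cannot rest on a linearization around one signal: it must hold in the worst case over all of $U$ and all admissible noise, i.e., one must bound the diameter of the entire set of signals consistent with the data, which requires a quantitative inverse function theorem uniform over $U$; establishing that is the bulk of the proof in \cite{batenkov2021b} and is why that proof is non-constructive rather than an estimator analysis. By contrast, your lower-bound sketch --- two cluster configurations matching moments through order $2\ell-2$, differing in the top moment, the difference being realizable as admissible noise because its Fourier data is $O((\Omega\Delta)^{2\ell-1})$ on the band --- is essentially the standard argument and is sound.
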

The min-max bounds establish fundamental recovery limits in any application modeled by \eqref{eq:noisy-data} (e.g. \cite{batenkov2019}).  Related results are known in the signal processing literature for Gaussian noise \cite{lee1992,stoica1995,shahram2005}. Despite a plethora of methods to solve this problem, up to date a tractable algorithm which achieves the min-max rates is missing from the literature. In particular, the frequently used ESPRIT algorithm is sub-optimal, both in terms of the bounds and the threshold SNR \cite{li2020a}.

While the proof of Theorem \ref{thm:minmax} is non-constructive, it motivates a design of an algorithm achieving the optimal rates in practice.
Let $\mathcal{J}:=\left[\frac{1}{2}\frac{\Omega}{2n-1},\frac{\Omega}{2n-1}\right]$. For a {\textbf{\mbox{decimation parameter}}} $\lambda\in\mathcal{J}$, the measurements $\{g(\lambda k)\}_{k=0}^{2n-1}$ yield the problem \eqref{eq:noisy-data} with $\{e^{2\pi\mathrm{j}x_j}\}_{j=1}^n$ replaced by $\{e^{2\pi\mathrm{j}\lambda x_j}\}_{j=1}^n$. By \cite[Prop.~5.8]{batenkov2021b} there exists an interval $\mathcal{I}\subset\mathcal{J}$ of length $|\mathcal{I}| \geq c\Omega$ such that every $\lambda\in \mathcal{I}$ satisfies $|e^{2\pi \mathrm{j} \lambda x_i} - e^{2\pi \mathrm{j} \lambda x_j}| \geq n^{-2}$, whenever $x_i,x_j$ belong to different clusters ({\bf{collision avoidance}}). By \cite[Prop.~5.12]{batenkov2021b}, for a collision-avoiding $\lambda$, the condition number of the (theoretical) solution map $\mathcal{P}_n$ which inverts \eqref{eq:noisy-data} in the case $e\equiv0$ and $\omega\in \left\{k\lambda \right\}_{k=0}^{2n-1}$
% $\mathcal{P}_n: \{g(k)\}_{k=0}^{2n-1} \mapsto \{\alpha_j,e^{2\pi\mathrm{j}x_j}\}_{j=1}^n$
matches the min-max rates. Set $\{\tilde{\alpha}_j,e^{2\pi\mathrm{j} \tilde{y}_{\lambda,j}}\}_{j=1}^n=\mathcal{P}_n\left\{g(\lambda k)\right\}_{k=0}^{2n-1}$. Let the set of all {\bf{aliased solutions}} corresponding to $\{\tilde{y}_{\lambda,j}\}_{j=1}^n$ be 
\begin{equation}\label{eq:Xlambda}
  X_{\lambda} :=  \bigcup_{j=1}^{n} \bigg\{\left(\lambda,t\right):\ t=\frac{\tilde{y}_{\lambda,j} + m}{\lambda}, m \in \mathbb{Z}, \left| t \right| \leq \frac{1}{2} \bigg\} , 
\end{equation} 
where the aliasing follows by periodicity of $y \mapsto e^{2\pi \mathrm{j}y}$.
The arguments above imply that $X_{\lambda}$ contains at least one element $(\lambda,t)$ with $t\approx x_j$ for each $j=1,\dots,n$ (call it {\bf Property P*}). Thus, to obtain a constructive procedure for recovery, we propose the following general approach: 
\begin{enumerate}
  \itemsep0em
  \item Find a collision-avoiding $\lambda\in\mathcal{J}$;
  \item Compute $\{\tilde{\alpha}_j,e^{2\pi\mathrm{j} \tilde{y}_{\lambda,j}}\}_{j=1}^n\approx\mathcal{P}_n\left\{g(\lambda k)\right\}_{k=0}^{2n-1}$ \emph{with optimal stability/accuracy};
  \item Find $\{(\lambda,\tilde{x}_j)\}_{j=1}^n \subset X_{\lambda}$ s.t. $\tilde{x}_j \approx x_j$ ({{\textbf{dealiasing}}}).  
\end{enumerate}

In this work we tackle steps 1 and 3. For step 2, we propose to use the classical Prony's method \cite{prony1795} (see \prettyref{alg:classical-prony}), which provides an exact solution to the problem in the noiseless regime. The use of Prony's method is justified by our recent results in \cite{katz2023} which prove its optimality in the regime $\Delta \ll 1$ and $\Omega$ fixed (corresponding to $\SRF\gg 1$ in \prettyref{thm:minmax}).

\begin{thm}[\cite{katz2023}]\label{thm:prony-optimality}
    Suppose $\ell_*$ is the largest cluster size, and each $x_j$ belongs to a cluster of size $\ell_j$.
    For $\epsilon\lessapprox \Delta^{2\ell_*-1}$, the output of \prettyref{alg:classical-prony} satisfies: $|x_j-\tilde{x}_j|\lessapprox \Delta^{2-2\ell_j}\epsilon$ for the nodes, and $|\alpha_j-\tilde{\alpha}_j| \lessapprox \Delta^{t_j}\epsilon$ for the amplitudes, where $t_j=1-2\ell_j$ if $\ell_j>1$, with $t_j=0$ if $\ell_j=1$.
\end{thm}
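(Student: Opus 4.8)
The plan is to track the measurement noise through the three stages of \prettyref{alg:classical-prony} by a first‑order perturbation analysis in which the cluster geometry enters each stage explicitly. Write $u_j=e^{2\pi\mathrm{j}x_j}$, let $m_k=\sum_{j=1}^n\alpha_j u_j^k$ be the exact moments and $\tilde m_k=m_k+e_k$, $|e_k|\le\epsilon$, the data for $k=0,\dots,2n-1$. The first step solves a Hankel system for the coefficients $\mathbf c$ of the Prony polynomial $q(z)=\prod_{j=1}^n(z-u_j)=z^n+\sum_{r=0}^{n-1}c_r z^r$. Starting from the factorization $H_0=VDV^{T}$ of the Hankel matrix $H_0=(m_{i+r})_{i,r=0}^{n-1}$, where $V=(u_j^{\,i})$ is the node Vandermonde matrix and $D=\operatorname{diag}(\alpha_j)$, and using that $\|\mathbf c\|=O(1)$ since $|u_j|=1$ and the nodes lie in a bounded set, the coefficient error is $\delta\mathbf c\approx -H_0^{-1}(\delta\mathbf m-\delta H_0\,\mathbf c)$ with a right‑hand side of size $O(\epsilon)$.

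The key step, and the reason the rate is governed by the cluster of $x_j$ alone rather than by the worst cluster, is the cancellation that appears when the perturbed polynomial is evaluated back at a true node. With $\mathbf u_j=(1,u_j,\dots,u_j^{n-1})^{T}$ one has $V^{-1}\mathbf u_j=\mathbf e_j$, hence $\mathbf u_j^{T}H_0^{-1}=\alpha_j^{-1}\mathbf e_j^{T}V^{-1}$ and $|\delta q(u_j)|\lessapprox|\alpha_j|^{-1}\|\mathbf e_j^{T}V^{-1}\|\,\epsilon$. The $j$‑th row of $V^{-1}$ consists of the monomial coefficients of the Lagrange polynomial $L_j(z)=\prod_{k\neq j}(z-u_k)/(u_j-u_k)$: its numerator has $O(1)$ coefficients, while the denominator $\prod_{k\neq j}(u_j-u_k)\asymp\Delta^{\ell_j-1}$ is dominated by the within‑cluster factors of $x_j$'s own cluster. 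Thus $\|\mathbf e_j^{T}V^{-1}\|\asymp\Delta^{1-\ell_j}$, giving $|\delta q(u_j)|\lessapprox\Delta^{1-\ell_j}\epsilon$; combined with the simple‑root formula $|\tilde u_j-u_j|\approx|\delta q(u_j)|/|q'(u_j)|$, the estimate $|q'(u_j)|=\prod_{k\neq j}|u_j-u_k|\asymp\Delta^{\ell_j-1}$, and $|x_j-\tilde x_j|\asymp|u_j-\tilde u_j|$, this yields the claimed node rate $\Delta^{2-2\ell_j}\epsilon$. The hypothesis $\epsilon\lessapprox\Delta^{2\ell_*-1}$ enters precisely here: it guarantees $|\delta q(u_j)|\ll|q'(u_j)|\,\Delta$ for every cluster, so the perturbed roots stay in the linear regime and can be matched one‑to‑one to the true nodes without merging inside a cluster.

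For the amplitudes one re‑solves the Vandermonde system $\tilde V\tilde{\boldsymbol{\alpha}}=\tilde{\mathbf m}$ on the first $n$ moments, giving $\delta\alpha_j\approx\mathbf e_j^{T}V^{-1}(\delta\mathbf m-\delta V\,\boldsymbol{\alpha})=\mathbf e_j^{T}V^{-1}\delta\mathbf m-\sum_k\alpha_k\,\delta u_k\,L_j'(u_k)$. The direct noise term inherits the same row estimate and is $O(\Delta^{1-\ell_j}\epsilon)$, already within the claimed bound, and for a singleton it reduces to $O(\epsilon)$. The feedback term is where the real difficulty lies, and this is the step I expect to be the main obstacle: bounding each $\delta u_k$ by the node rate and each weight $L_j'(u_k)$ separately by norm estimates overshoots the target $\Delta^{1-2\ell_j}\epsilon$ — for instance it would falsely predict amplitude errors larger than $\epsilon$ at a singleton adjacent to a large cluster. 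The resolution is that the individual node errors $\{\delta u_k\}$ inside a cluster are strongly correlated: only a few low‑order symmetric functions of each cluster are recovered as stably as the node rate suggests, and the smooth weights $L_j'(u_k)$ sum these correlated errors into a quantity behaving like a higher cluster moment, producing an extra cancellation of order $\Delta^{\ell_k-1}$ per cluster. Carrying out this cancellation carefully — rather than bounding $V^{-1}$ and $\delta V$ independently — is what pins down the sharp exponent $t_j=1-2\ell_j$ for genuine clusters, while $t_j=0$ for singletons follows because the local row norm is $O(1)$ and no within‑cluster amplification occurs.
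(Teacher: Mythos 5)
First, a point of reference: the paper you were given contains \emph{no proof} of this statement. Theorem~\ref{thm:prony-optimality} is imported from the authors' prior work \cite{katz2023} and is used as a black box in the appendix, so your attempt can only be assessed on its own merits. On those merits, your node analysis is sound and follows the natural route: from the factorization $H_0=VDV^{\top}$ you extract the key identity $\mathbf{u}_j^{\top}H_0^{-1}=\alpha_j^{-1}\mathbf{e}_j^{\top}V^{-1}$, the $j$-th row of $V^{-1}$ is the Lagrange coefficient vector of norm $\asymp\Delta^{1-\ell_j}$, and together with $|q'(u_j)|\asymp\Delta^{\ell_j-1}$ this yields the local rate $\Delta^{2-2\ell_j}\epsilon$, with the threshold $\epsilon\lessapprox\Delta^{2\ell_*-1}$ keeping the linearization self-consistent. (To upgrade this from a first-order calculation to a proof you would still need a quantitative root-localization step, e.g. a Rouch\'e argument on disks of radius $c\Delta$ around each $u_j$, which you only gesture at; that part is routine.)

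The genuine gap is the amplitude bound, and you have named it yourself without closing it. Your naive estimate of the feedback term $\sum_k\alpha_k\,\delta u_k\,L_j'(u_k)$ indeed overshoots (it would give $\Delta^{1-\ell_*}\epsilon\gg\epsilon$ for a singleton adjacent to an $\ell_*$-cluster), but your proposed fix --- ``correlated node errors resum into a higher cluster moment, gaining a factor $\Delta^{\ell_k-1}$ per cluster'' --- is an assertion, not an argument: nothing in the write-up exhibits the cancellation or even identifies the quantity that stays bounded. To make it work you must \emph{not} bound the $\delta u_k$ individually, but keep $\delta u_k\approx-\delta q(u_k)/q'(u_k)$ as a linear functional of the single perturbation polynomial $\delta q$, and exploit algebraic identities such as $L_j'(u_k)=q'(u_k)\bigl/\bigl((u_k-u_j)\,q'(u_j)\bigr)$ for $k\neq j$, so that the feedback becomes $\frac{1}{q'(u_j)}\sum_{k\neq j}\frac{\alpha_k\,\delta q(u_k)}{u_k-u_j}+\cdots$, with $\alpha_k\,\delta q(u_k)\approx-\mathbf{e}_k^{\top}V^{-1}\boldsymbol{\eta}$ the same Lagrange-row functionals as in the node step. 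The required cancellation is then a resummation over each cluster of the polynomials $\sum_{k\in C}L_k(\cdot)/(u_k-u_j)$ via partial-fraction/divided-difference identities (individually these rows have norm $\Delta^{1-\ell_k}$, but their cluster sums are much smaller, exactly as $\sum_k L_k\equiv 1$ despite huge individual coefficients). Since the sharp exponents $t_j=1-2\ell_j$, and especially $t_j=0$ for singletons, are precisely what this computation must deliver --- and they are the hardest, most distinctive claims of the theorem --- the proposal as written is incomplete.
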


Define the node/amplitude {\bf{error amplification factors}} $$\mathcal{K}_{x,j}:=\epsilon^{-1}\Omega|x_j-\tilde{x}_j|, \quad  \mathcal{K}_{\alpha,j}:=\epsilon^{-1}|\alpha_j-\tilde{\alpha}_j|.$$ Fixing $\Omega=2n-1$ for $n\in \left\{ 3,4,5\right\}$ and cluster sizes $\ell\in\{2,3,5\}$, Fig. \ref{fig:vanilla} shows that, indeed, both $\mathcal{K}_{x,j}$ and $\mathcal{K}_{\alpha,j}$ computed by Alg. \ref{alg:classical-prony} scale as the min-max rates above.

\begin{remark}\label{rem:successful-recovery}
In all numerical tests in this paper, we follow \cite{batenkov2021b} and consider $x_k$ to be {\textbf{successfully recovered}} if the error $|x_k-\tilde{x}_k|$ is smaller than one third of the distance between $x_k$ and its nearest neighbor.
\end{remark}

%  Fixing $\Omega=2n-1$, numerical tests of Prony's method for the case $n\in \left\{ 3,5\right\}$ with a cluster of size $\ell=2$ in the regime $\Delta \ll 1$ (corresponding to $\SRF\gg 1$ in \prettyref{thm:minmax}), in the presence of measurement noise are given in \prettyref{fig:vanilla}. The tests provide evidence that \textbf{Prony's method asymptotically achieves the optimal min-max rates} required for step 2.
  
\begin{algorithm2e}[hbt]
  \SetKwInOut{Notation}{Notation} \SetKwInOut{Input}{Input} \SetKwInOut{Output}{Output}
 
  \Input{Sequence $\{\tilde{m}_k\equiv  g(k)\}_{k=0}^{2n-1}$ }
  \Output{Estimates $\{ \tilde{x}_k,\ \tilde{\alpha}_k\}_{k=1}^n$}
  \Notation{$\operatorname{col}\left\{y_k \right\}_{k=1}^m = [y_1,\dots,y_m]^{\top}\in \mathbb{C}^m$.}
    Construct $\tilde{H}_n = \left(\tilde{m}_{i+j} \right)_{0\leq i,j\leq n-1}$
    % \begin{equation*}
    %   \tilde{H}_n = \left(\tilde{m}_{i+j} \right)_{i=0,\dots,n-1}^{j=0,\dots,n-1}
    % \end{equation*}
 \\
    %Assuming $\operatorname{det}(\tilde{H}_n) \neq 0$, 
    Solve the linear least squares problem %$\tilde{H}_n\cdot \operatorname{col}\left\{q_k \right\}_{k=0}^{n-1} = -\operatorname{col}\left\{\tilde{m}_k \right\}_{k=n}^{2n-1}$
    \[\operatorname{col}\left\{q_k \right\}_{k=0}^{n-1} = \underset{\boldsymbol{q} \in \mathbb{C}^n}{\argmin} \big\|\tilde{H}_n\boldsymbol{q} + \operatorname{col}\left\{\tilde{m}_k \right\}_{k=n}^{2n-1}\big\|_2\]
    % \begin{equation*}
    %     \tilde{H}_n\cdot \operatorname{col}\left\{q_k \right\}_{k=0}^{n-1} = -\operatorname{col}\left\{\tilde{m}_k \right\}_{k=n}^{2n-1}
    % \end{equation*}
\\
    Compute $\{ \tilde{z}_k\}_{k=1}^n$ as the roots of the (perturbed) Prony polynomial $ q(z) := z^n+\sum_{j=0}^{n-1} q_{j}z^j$.
\\
    Recover $\left\{\tilde{x}_k \right\}_{k=1}^n$ from $\tilde{z}_k$ via $\tilde{x}_k = \frac{\operatorname{Arg}(\tilde{z}_k)}{2\pi}$.
\\
	 Construct $\tilde{V} = \left( \tilde{z}_k^i\right)_{i=0,\dots,n-1}^{k=1,\dots,n}$ and solve
    %$\left( \tilde{z}_k^i\right)_{i=0,\dots,n-1}^{k=1,\dots,n}\cdot \operatorname{col}\left\{\tilde{\alpha}_k \right\}_{k=1}^n = \operatorname{col}\left\{\tilde{m}_k \right\}_{k=0}^{n-1}$
    $\operatorname{col}\left\{\tilde{\alpha}_k \right\}_{k=1}^n = \underset{\boldsymbol{\alpha} \in \mathbb{C}^n}{\argmin}\big\|\tilde{V}\boldsymbol{\alpha} - \operatorname{col}\left\{\tilde{m}_k \right\}_{k=0}^{n-1}\big\|_2$
% 	\begin{equation*}%\label{eq:vand-system}
% 	    \left( \tilde{z}_k^i\right)_{i=0,\dots,n-1}^{k=1,\dots,n}\cdot \operatorname{col}\left\{\tilde{\alpha}_k \right\}_{k=1}^n = \operatorname{col}\left\{\tilde{m}_k \right\}_{k=0}^{n-1}
% 	\end{equation*}
 \caption{The Classical Prony method}\label{alg:classical-prony}
	\Return the estimated parameters $\left\{\tilde{x}_k,\tilde{\alpha}_k\right\}_{k=1}^n$
\end{algorithm2e}

\begin{figure}[htb]

\begin{minipage}[b]{1.0\linewidth}
    \centering
    \centerline{\includegraphics[width=8.5cm]{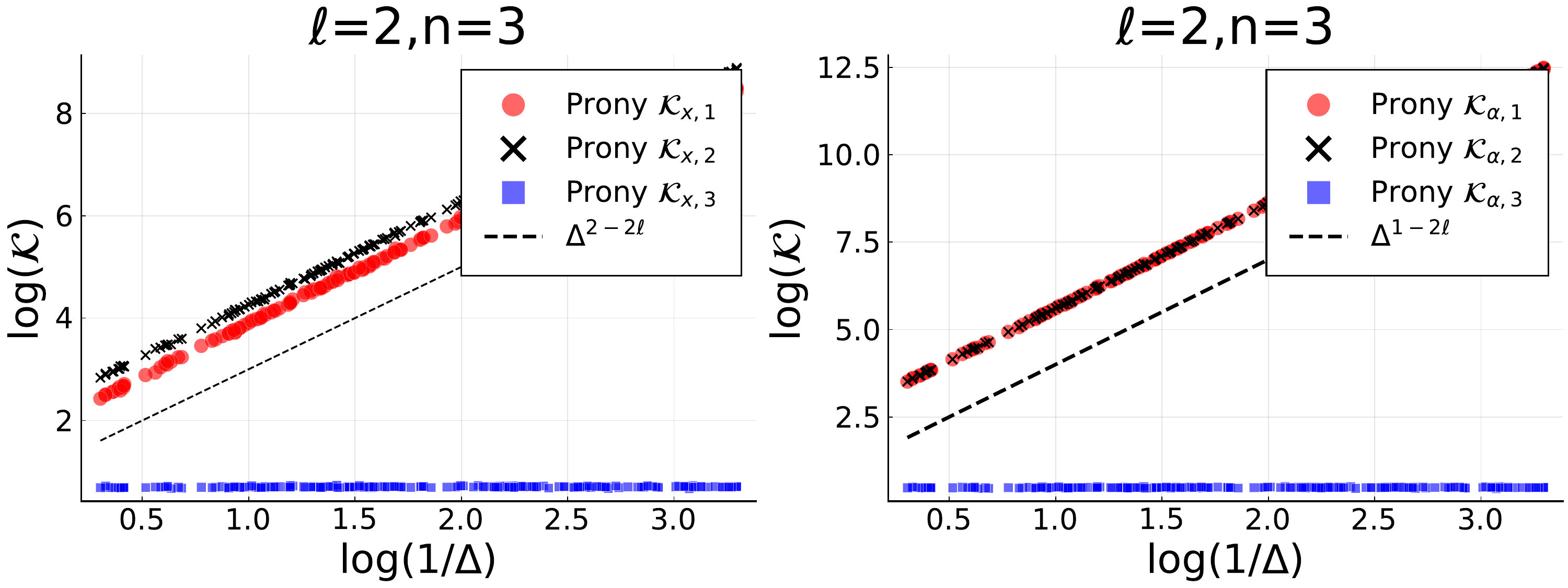}}
  %  \vspace{2.0cm}
    \centerline{(a) Error amplification factors for $\ell=2,n=3$}\medskip
  \end{minipage}

  \begin{minipage}[b]{.48\linewidth}
    \centering
    \centerline{\includegraphics[width=4.0cm]{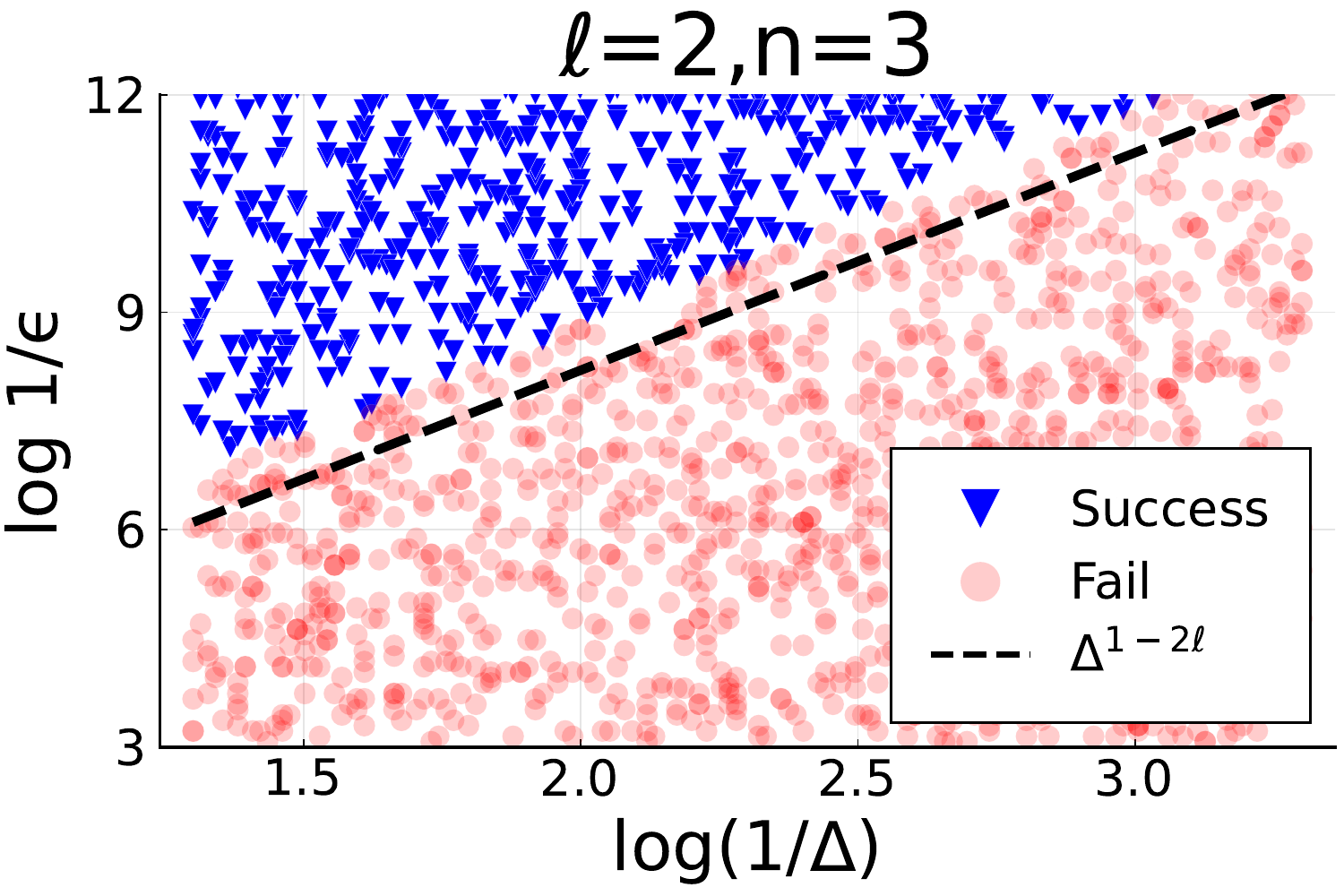}}
  %  \vspace{1.5cm}
    \centerline{(b) Recovery SNR threshold}\medskip
  \end{minipage}
  \hfill
  \begin{minipage}[b]{0.48\linewidth}
    \centering
    \centerline{\includegraphics[width=4.0cm]{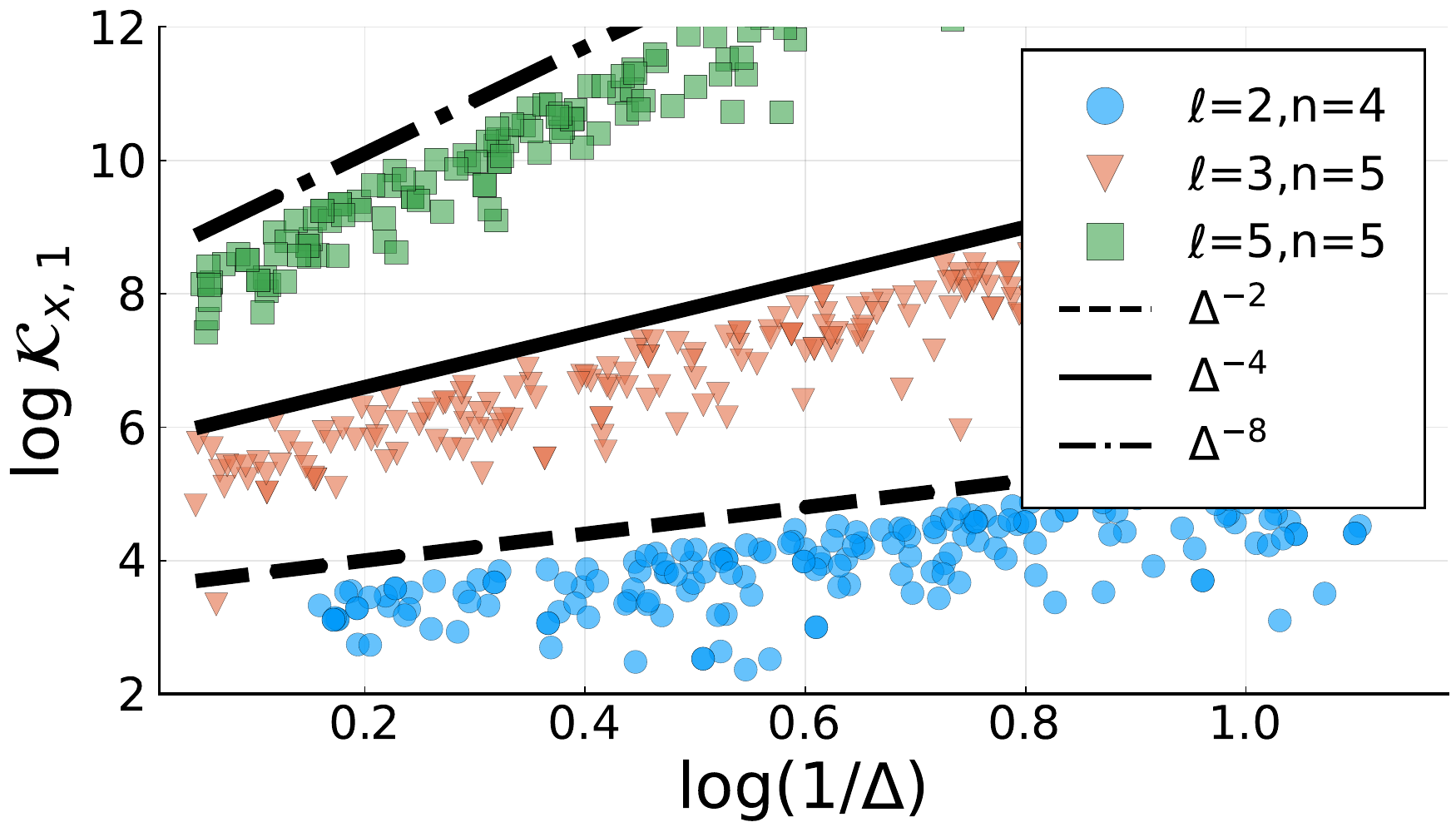}}
  %  \vspace{1.5cm}
    \centerline{(c) $\mathcal{K}_{x,1}$ for various $\ell,n$}\medskip
  \end{minipage}

  \caption{\small Classical Prony method - asymptotic optimality. (a) For cluster nodes, $\left\{\mathcal{K}_{x,j} \right\}$ (left) scale like  $\Delta^{2-2\ell}$, while $\left\{ \mathcal{K}_{\alpha,j}\right\}$ (right) scale like $\Delta^{1-2\ell}$. For the non-cluster node $j=3$, $\left\{\mathcal{K}_{x,j} \right\}$ and $\left\{\mathcal{K}_{\alpha,j} \right\}$ are lower bounded by a constant. (b) Noise threshold for recovery of cluster nodes  scales like $\Delta^{2\ell-1}$. (c) $\mathcal{K}_{x,j}\asymp \Delta^{2-2\ell}$ for cluster nodes holds for different $\ell,n$. The number of tests is (a) 200, (b) 2000, (c) 200 for each $\ell,n$.}
  \label{fig:vanilla}
  \end{figure}

\section{Decimated Prony's method}\label{sec:dp}

Here we develop the Decimated Prony's Method (DPM) (Alg. \ref{alg:DPM}). To find a collision-avoiding $\lambda$, we consider $X_{\lambda}$ as in \eqref{eq:Xlambda} for each $\lambda\in G$ where $G=\operatorname{linspace}(\mathcal{J},N_{\lambda})$ is the uniform grid of size $N_{\lambda}\in\mathbb{N}$ (the choice of $N_{\lambda}$ is motivated in Remark \ref{rem:NbNlambda}. Cf. Fig. \ref{fig:good_lam} for a numerical justification of this approach) [steps 1--5]. Next, we compute the histogram of $\{x: (\lambda,x)\in \bigcup_{\lambda\in G} X_{\lambda}\}$ with $N_b$ bins and find the $n$ bins $\{B_j\}_{j=1}^n$ with largest counts [step 6]. Following the criterion for successful node recovery (see Remark \ref{rem:successful-recovery}), $N_b$ is set to $3\Delta^{-1}$. Based on \cite{batenkov2021b} we expect the set
\begin{equation}\label{eq:lambda}
  \Lambda:=\bigcap_{k=1}^{n} \{\lambda: (\lambda,x) \in \bigcup_{\lambda\in G} X_{\lambda} \wedge x\in B_k\}
\end{equation}
to contain only collision-avoiding $\lambda$'s. In particular, if $\lambda$ is not collision-avoiding, {\bf Property P*} will not be satisfied since at least two nodes will be ill-conditioned. Furthermore, the proof of \cite[Prop.~5.17]{batenkov2021b} suggests that if $\lambda_1\neq\lambda_2$ are collision-avoiding, then, with high probability, $(\lambda_1,t_1)\in X_{\lambda_1}$ and $(\lambda_2,t_2)\in X_{\lambda_2}$ with $t_1\approx t_2$ implies $t_1\approx t_2\approx x_j$ for some $j=1,\dots,n$. Thus, provided $\Lambda\neq\emptyset$ (otherwise the algorithm fails), we choose $\lambda^*=\max \Lambda$ to obtain maximal in-cluster separaton of $\{e^{2\pi\mathrm{j} \lambda^* x_j}\}_{j=1}^n$ [step 7] and recover the corresponding $\{\tilde{x}_j\}_{j=1}^n$ by choosing $\tilde{x}_j \in B_j$ s.t. $(\lambda^*,\tilde{x}_j)\in X_{\lambda^*}$ [step 8]. Finally, the amplitude approximations are found by solving a Vandermonde system [step 9].

We are able to prove (see the Appendix) correctness of DPM in a special case of single cluster (i.e. $\ell=n$).

\begin{thm}\label{thm:correctness-single-cluster}
    In the notations of \prettyref{thm:minmax}, suppose $\ell=n$. Under a further technical assumption (to be elaborated in the proof), and with the choice of $N_{\lambda}=O(\Omega)$ and $N_b=3\Delta^{-1}$, \prettyref{alg:DPM} attains the bounds of \prettyref{thm:minmax}.
\end{thm}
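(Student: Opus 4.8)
The plan is to split the analysis into a clean \emph{stability} part, which reduces the decimated subproblem to \prettyref{thm:prony-optimality}, and a harder \emph{dealiasing} part, which certifies that algorithm steps 1--8 select a good decimation parameter $\lambda^*$ and the correct aliased branch.

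First I would record the reduction induced by decimation. For any $\lambda\in\mathcal{J}$ the samples $\{g(\lambda k)\}_{k=0}^{2n-1}$ constitute an instance of \eqref{eq:noisy-data} with unit-bandwidth integer sampling, the same amplitudes $\{\alpha_j\}$, noise still bounded by $\epsilon$ (since $|e(\lambda k)|\le\|e\|_\infty\le\epsilon$), and nodes $y_{\lambda,j}:=\lambda x_j\bmod 1$ of minimal separation $\delta_\lambda:=\lambda\Delta$. Because $\ell=n$ there is a single cluster, so $\delta_\lambda$ \emph{is} the cluster scale of the decimated problem, and for $\lambda\asymp\Omega$ one has $\delta_\lambda\asymp\Omega\Delta=\SRF^{-1}$. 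Applying \prettyref{thm:prony-optimality} to this subproblem with $\ell_*=\ell_j=n$ and $\Delta$ replaced by $\delta_\lambda$ — whose hypothesis $\epsilon\lessapprox\delta_\lambda^{2n-1}\asymp(\Omega\Delta)^{2n-1}$ is exactly the standing assumption of \prettyref{thm:minmax} — yields $|y_{\lambda,j}-\tilde y_{\lambda,j}|\lessapprox\delta_\lambda^{2-2n}\epsilon$ and $|\alpha_j-\tilde\alpha_j|\lessapprox\delta_\lambda^{1-2n}\epsilon$.

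Next comes the error transport. Granting that step 8 has chosen the correct branch, $\tilde x_j=(\tilde y_{\lambda^*,j}+m_j)/\lambda^*$ with $m_j$ the integer attached to $x_j$, so $|x_j-\tilde x_j|=(\lambda^*)^{-1}|y_{\lambda^*,j}-\tilde y_{\lambda^*,j}|$. Substituting the Prony bound and using $\lambda^*\asymp\Omega$ gives $|x_j-\tilde x_j|\lessapprox(\lambda^*)^{1-2n}\Delta^{2-2n}\epsilon\asymp\Omega^{1-2n}\Delta^{2-2n}\epsilon=\SRF^{2n-2}\,\epsilon/\Omega$, while $|\alpha_j-\tilde\alpha_j|\lessapprox(\Omega\Delta)^{1-2n}\epsilon=\SRF^{2n-1}\epsilon$; these are precisely the $\ell=n$ rates of \prettyref{thm:minmax}. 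Taking $\lambda^*=\max\Lambda$ maximizes $\delta_{\lambda^*}$ and hence only sharpens the implied constants.

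The crux, and what I expect to be the main obstacle, is the dealiasing: showing that the histogram over the grid $G$ (size $N_\lambda=O(\Omega)$, bin width $\Delta/3$ since $N_b=3\Delta^{-1}$) isolates exactly the $n$ bins $\{B_j\}$ straddling the true nodes, that $\Lambda\neq\emptyset$, and that the branch recovered in step 8 is correct. The favorable counts come from \textbf{Property P*}: in the single-cluster case every $\lambda\in\mathcal{J}$ is (vacuously) collision-avoiding, so each $X_\lambda$ places one point within $\Omega^{1-2n}\Delta^{2-2n}\epsilon\lessapprox\Delta$ of each $x_j$, and across the grid every true-node bin accrues $\Theta(N_\lambda)=\Theta(\Omega)$ votes. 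The danger is a spurious pile-up: each $X_\lambda$ also carries $O(\lambda)=O(\Omega)$ aliased ghosts $(\tilde y_{\lambda,j}+m)/\lambda$, for a total of $O(\Omega^2)$ ghost votes, and I must prevent these from filling any wrong bin to a comparable level. This is where I would invoke the \emph{further technical assumption}: a non-resonance/genericity condition on $\{x_j\}$, in the spirit of the probabilistic argument underlying \cite[Prop.~5.17]{batenkov2021b}, forcing the ghosts to equidistribute as $\lambda$ ranges over $G$. Under equidistribution a typical wrong bin collects only $O(\Omega^2\Delta)=O(\Omega/\SRF)$ votes, which is $\ll\Omega$ precisely because $\SRF\gg1$; hence the top-$n$ bins are the true-node bins, every collision-avoiding $\lambda$ meeting all of them survives into $\Lambda$ (so $\lambda^*=\max\Lambda\asymp\Omega$ exists), and the selected branch is the correct $m_j$, which closes the argument.
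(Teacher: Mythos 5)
Your first half (stability and error transport) is correct and is essentially what the paper does implicitly: decimation rescales the single cluster's separation to $\lambda\Delta$, \prettyref{thm:prony-optimality} applies to the decimated problem under $\epsilon\lessapprox(\Omega\Delta)^{2n-1}$, and dividing the resulting node error by $\lambda^*\asymp\Omega$ yields the rates of \prettyref{thm:minmax}. You also correctly identify dealiasing as the crux. The gap is that you then resolve that crux by fiat rather than by proof.

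The paper's ``further technical assumption'' is \emph{not} an equidistribution hypothesis on the ghosts; it is only the mild genericity condition that each $x_j$ lies at distance at least $a\Delta$ from its nearest bin edge, so that the $N_{\lambda}$ valid approximations of $x_j$ (one per $\lambda\in G$, all within $a\Delta$ of $x_j$) land in a single bin. The suppression of ghost bins is then \emph{proven}, deterministically and in the worst case, via \prettyref{prop:pf-main-technical-prop} (a simplified \cite[Prop.~F.3]{batenkov2021b}): for every ghost center $c=m/\lambda_0$ with $m\neq 0$, there is an interval $I_m\subset\mathcal{J}$ of fixed length $1/3$ on which \emph{every} aliased point $k/\lambda$, $k\in\mathbb{Z}$, stays at distance greater than $6\tilde{h}$ from $c$ (here $\tilde{h}=\tau\Delta$ is the cluster radius); choosing $N_{\lambda}\geq C_1\Omega$ forces a grid point $\lambda_m\in I_m\cap G$, whose entire set $X_{\lambda_m}$ therefore misses the bin containing that ghost, so every ghost bin has count at most $N_{\lambda}-1$, strictly below the count $N_{\lambda}$ of every true-node bin. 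By contrast, your proposed assumption that the ghosts equidistribute as $\lambda$ ranges over $G$ is essentially the statement that wrong bins have small counts --- i.e., you assume the very step that needs proof. Moreover, even granting it, your bound of $O(\Omega^2\Delta)$ votes is for a \emph{typical} wrong bin, whereas the top-$n$ selection in step 6 of \prettyref{alg:DPM} requires a bound on the \emph{maximal} wrong-bin count: a single resonant pile-up, in which many $\lambda\in G$ produce nearly coincident ghosts, would defeat the selection, and nothing in your argument excludes it. Supplying exactly this exclusion, uniformly over all ghost locations, is the content of the paper's number-theoretic proposition and is the missing idea in your proposal; once it is in place, your remaining claims ($\Lambda\neq\emptyset$, correctness of the branch chosen in step 8, and the final rates) go through as you describe.
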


\begin{remark}\label{rem:NbNlambda}
     We conjecture that the choices $N_{\lambda}\gtrapprox\Omega$ and $N_b\approx \Delta^{-1}$ are sufficient to ensure correctness of the algorithm in the general case. We leave the rigorous justification of this claim to future work. As a guideline, increasing $N_{\lambda}$ is expected to improve the  robustness of the DPM.
\end{remark}

Next, we analyze the time complexity of DPM by addressing each step of \prettyref{alg:DPM}. Below we use the notation $O=O_n$ (recall that $n$ is considered fixed). The classical Prony's method has complexity $O(1)$, since it depends only on $n$. For every $\lambda \in G$ we apply \prettyref{alg:classical-prony} with the samples $\left\{g(\lambda k) \right\}$ and compute $X_{\lambda}$ in \eqref{eq:Xlambda}, which costs  $O(N_{\lambda}+\lambda N_{\lambda})$. Computing the histogram with $N_b$ bins for data of size $\sum_{i=1}^{N_{\lambda}}n\lambda_i$ costs $O(N_{\lambda}\Omega + N_b)$. Finding the bins $\left\{B_k\right\}_{k=1}^n$ with $n$ largest counts costs $O(N_b)$. Computing $\Lambda$ in \eqref{eq:lambda} and  $\lambda^* = \max \Lambda$, together with finding the estimated $\{\tilde{x}_k\}_{k=1}^n$ costs  $O(N_{\lambda})$. Finally, solving an $n$-order Vandermonde system costs $O(1)$ \cite{bjork1970}. Thus, the \textit{total complexity} is $O(N_{\lambda}\Omega + N_b)$.  As mentioned in Remark \ref{rem:NbNlambda}, $N_{\lambda} \gtrapprox \Omega$ and $N_b \gtrapprox \Delta^{-1}$ are expected to be sufficient for correctness of DPM. This gives overall complexity $O(\Omega^2)+O(\Delta^{-1})$. For small $\Omega$ the dominating factor is $\Delta^{-1}$, in which case we may take $N_{\lambda}=O(\srf)$ to have maximal robustness. Otherwise, for large $\Omega$ the dominating factor is $O(\Omega^2)$. For comparison, the widely used ESPRIT method \cite{stoica2005} has a time complexity of order $O(\Omega^3)$: it includes three SVD decompositions and several matrix multiplications of order $O(\Omega) \times O(\Omega)$.
\begin{figure}[htb]
  \includegraphics[width=0.49\linewidth]{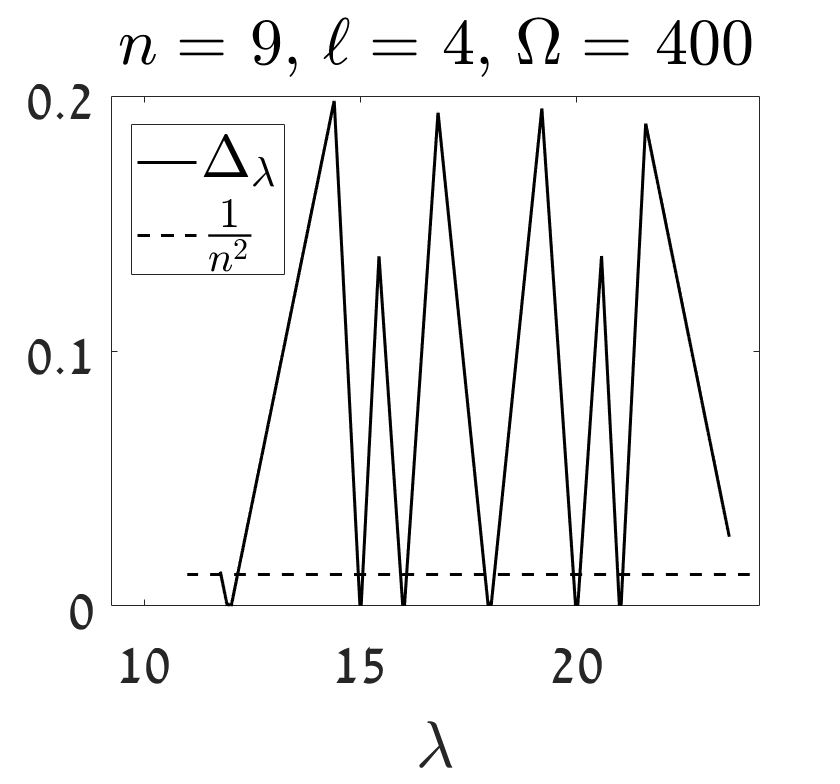} \hfill
  \includegraphics[width=0.49\linewidth]{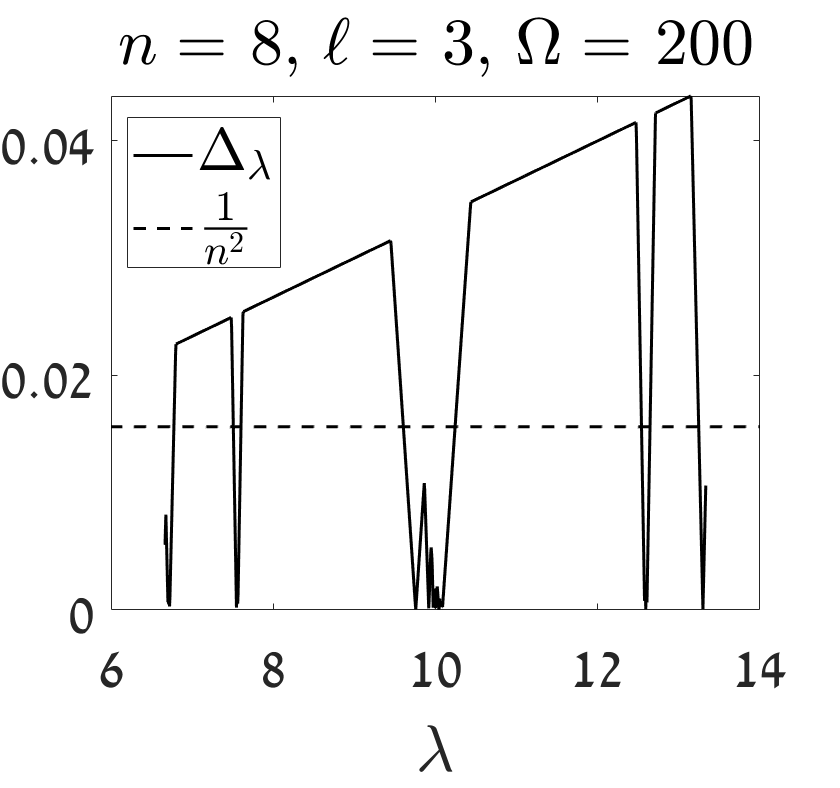}
  \caption{\small  The separation $\Delta_{\lambda} := \min_{s \neq k}\left|\operatorname{arg}\left(e^{2\pi \mathrm{j}\lambda(x_k-x_s)}\right)\right|$ as a function of $\lambda$, for the single (left) and multi-cluster (right) cases. We see that most $\lambda\in G$ satisfy  $\Delta_{\lambda} > \frac{1}{n^2}$, i.e. are collision-avoiding. Here we set $N_{\lambda}=\Omega$.}
  \label{fig:good_lam}
\end{figure}

\begin{algorithm2e}[hbt]
\caption{Decimated Prony Method}\label{alg:DPM}
\SetKwInOut{Input}{Input} \SetKwInOut{Output}{Output} \SetKwInOut{Data}{Data}
\SetKwFunction{Prony}{Prony}
\SetKwFunction{ArgMax}{ArgMax}
\Data{$N_{\lambda}, n, \Omega, N_b$}
\Input{$g(\omega)$ as in \eqref{eq:noisy-data}}
\Output{Estimates $\{ \tilde{x}_k,\ \tilde{\alpha}_k\}_{k=1}^n$}

 \For{$\lambda \in G  := \operatorname{linspace}(\mathcal{J},N_{\lambda})$}{
  $\tilde{m}^{(\lambda)}:= \left\{\tilde{m}^{(\lambda)}_{k}=g(\lambda k) \right\}_{k=0}^{2n-1}$ \\
  $\{e^{2\pi\mathrm{j} \tilde{y}_{\lambda,k}},\tilde{\alpha}_{\lambda,k}\} \longleftarrow \operatorname{Prony}\left(\tilde{m}^{(\lambda)} \right)$
  \\
  Compute $X_{\lambda}$ as in \eqref{eq:Xlambda}
 }
 
 $X \longleftarrow \bigcup_{\lambda \in G} X_{\lambda}$
 \\
  Compute $H$ - Histogram of $\{x: (\lambda,x)\in X\}$ with $N_b$ bins. Set
 $\left\{B_k \right\}_{k=1}^n = $ \ArgMax{$H,n$} 
 \\
 Compute $\Lambda$ as in \eqref{eq:lambda} and $\lambda^* \xleftarrow{} \max\{\lambda: \lambda \in \Lambda\}$
 \\
  $\{\tilde{x}_j\}_{j=1}^n \longleftarrow \{x: (\lambda^*,x) \in X_{\lambda^*} \wedge x\in B_j\}$
%  $\{\tilde{x}_j\}_{j=1}^n \longleftarrow P_x\big(H(B_1,\dots,B_n)\bigcap X_{\lambda^*}\big) {\color{purple}\textbf{$\{y_{\lambda^*,j}\}$?}}$
 \\
 Construct $\tilde{V} = \left( e^{2\pi\mathrm{j}\tilde{x}_j k \lambda^*}\right)_{k=0,\dots,n-1}^{j=1,\dots,n}$ and solve {\small
    $\operatorname{col}\left\{\tilde{\alpha}_k \right\}_{k=1}^n = \underset{\boldsymbol{\alpha} \in \mathbb{C}^n}{\argmin}\biggl\|\tilde{V}\boldsymbol{\alpha} - \operatorname{col}\left\{\tilde{m}_k^{(\lambda^*)} \right\}_{k=0}^{n-1}\biggr\|_2$}

%  Solve $\tilde{V}_{\lambda^*}\cdot {\tilde{\alpha}}=\tilde{m}^{(\lambda^*)}$ where $\tilde{V}_{\lambda^*} = \big[e^{i\lambda^* kx_j}\big]_{k=0,\dots,n-1}^{j=1,\dots,n}$\;
\KwRet{the estimates $\{ \tilde{x}_k,\ \tilde{\alpha}_k\}_{k=1}^n$}
\end{algorithm2e}

We perform reconstruction tests of a signal with random complex amplitudes and measurement noise in a single-cluster configuration  with $\ell<n$, where $\epsilon, \Omega, \Delta$ are chosen uniformly at random. The results appear in Fig. \ref{fig:dp-numerics}. We also investigate the noise threshold  $\epsilon \lessapprox \srf^{1-2\ell}$ for successful recovery (see Remark \ref{rem:successful-recovery}), and compare to \prettyref{thm:minmax} by recording the success/failure result of each experiment. These results provide numerical validation of the optimality of DPM both in terms of the SNR threshold and the attained estimation accuracy.

Finally, we compare the performance of DPM with the ESPRIT method. Fixing $\ell=2,n=3$, $\Delta=10^{-2.8},\Omega=10^{2.5}$ and running 50 tests for each of 10 values of $\epsilon$ between $10^{-3.5}$ and $10^{-2}$, the mean absolute error in recovering the cluster node is comparable between the two methods. However, DPM with $N_{\lambda}=50$ runs about 7 times faster (Fig. \ref{fig:dp-numerics}(c)).

%{\color{blue} Code for reproducing the figures is available at TBD.}

% \vspace{3cm}
%  In particular, \cite[Proposition 5.8]{batenkov2021b} shows existence of $\lambda\in I$ that separates cluster nodes by $O(\Omega \Delta)$ and the non-cluster nodes by at least  $O(\frac{1}{n^2})$. This is validated in \prettyref{fig:good_lam}, where we compute  
 
%  In \prettyref{fig:dp-numerics} we perform numerical tests of reconstructing a signal with random complex amplitudes and measurement noise in a single-clustered configuration. We choose $\epsilon, \Omega, \Delta$ uniformly at random. As in \cite{batenkov2021b}, we measure the error amplification factors of the nodes and amplitudes (denoted $\mathcal{K}_{x}{\color{purple}:= \frac{|x_j-\tilde{x}_j|\cdot \Omega}{\epsilon}}, \mathcal{K}_{\alpha}{\color{purple}:= \frac{|\alpha_j-\tilde{\alpha}_j|}{\epsilon}}$, respectively). We further investigate the noise threshold  $\epsilon \lessapprox \srf^{1-2\ell}$ (corresponding to the SNR threshold $1/\epsilon \gtrapprox \srf^{2\ell-1}$) for successful recovery, as predicted by Theorem \ref{thm:minmax} by recording the success/failure result of each one of the experiments. Our results provide numerical validation of the optimality of DPM both in terms of the SNR threshold and the attained estimation accuracy.

\section{Discussion}
Future research avenues include a rigorous proof of the algorithm's correctness in the general case and improving its robustness, building upon the theory developed in \cite{batenkov2021b,katz2023}.  We believe our method can be extended to higher dimensions, along the lines of recent works such as \cite{diederichs2022}.

% To validate the proposed algorithm, we show numerically that most of the $\lambda$'s attain good decimation properties. By the definition in \cite[Proposition 5.8]{batenkov2020}, a \textit{good decimation parameter} $\lambda$ separates the cluster nodes by $O(\Omega \Delta)$ and the non-cluster nodes at least by $O(\frac{1}{n^2})$ after a blow-up by $\lambda$. We compute $\Delta_{\lambda} := \min_{s \neq k}\big|Arg(\frac{z_s^{\lambda}}{z_k^{\lambda}})\big|$ as a function of $\lambda$ in the single and multi-cluster case (\prettyref{fig:good_lam}). 
\begin{figure}[htb]
\begin{minipage}[b]{1.0\linewidth}
  \centering
  \centerline{\includegraphics[width=8.5cm]{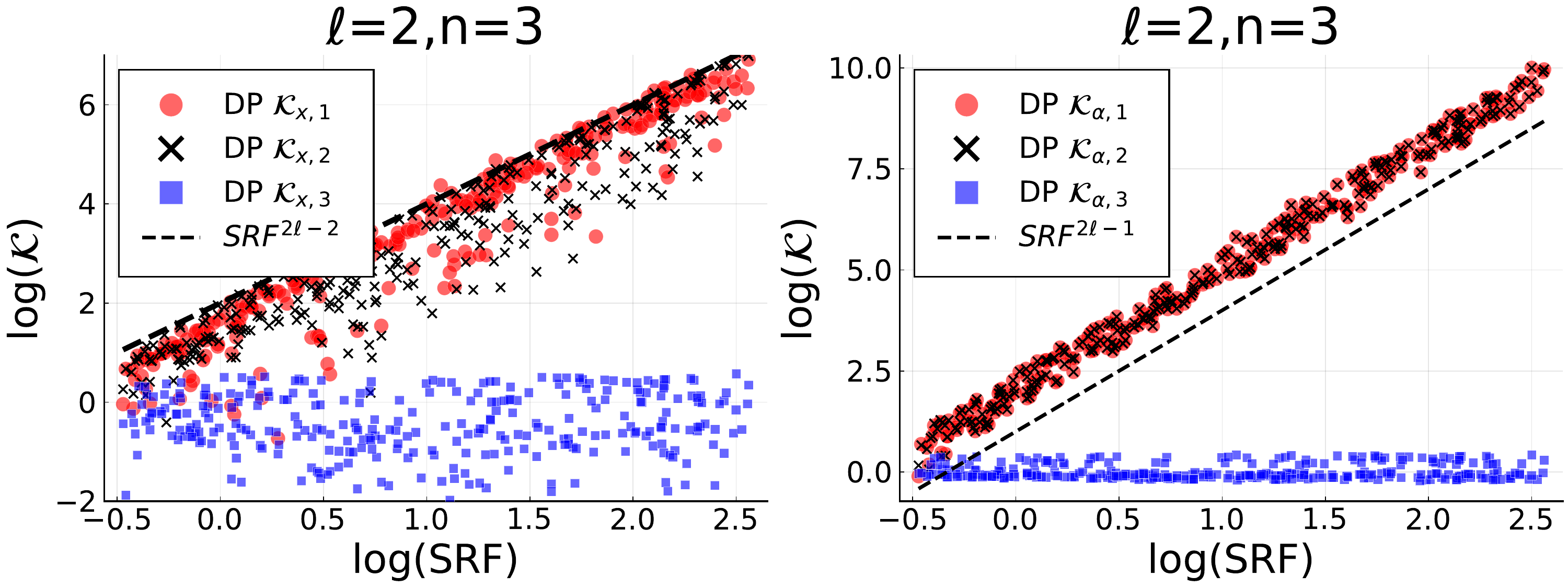}}
%  \vspace{2.0cm}
  \centerline{(a) Error amplification factors for $\ell=2,n=3$.}\medskip
\end{minipage}

\begin{minipage}[b]{.48\linewidth}
  \centering
  \centerline{\includegraphics[width=4.0cm]{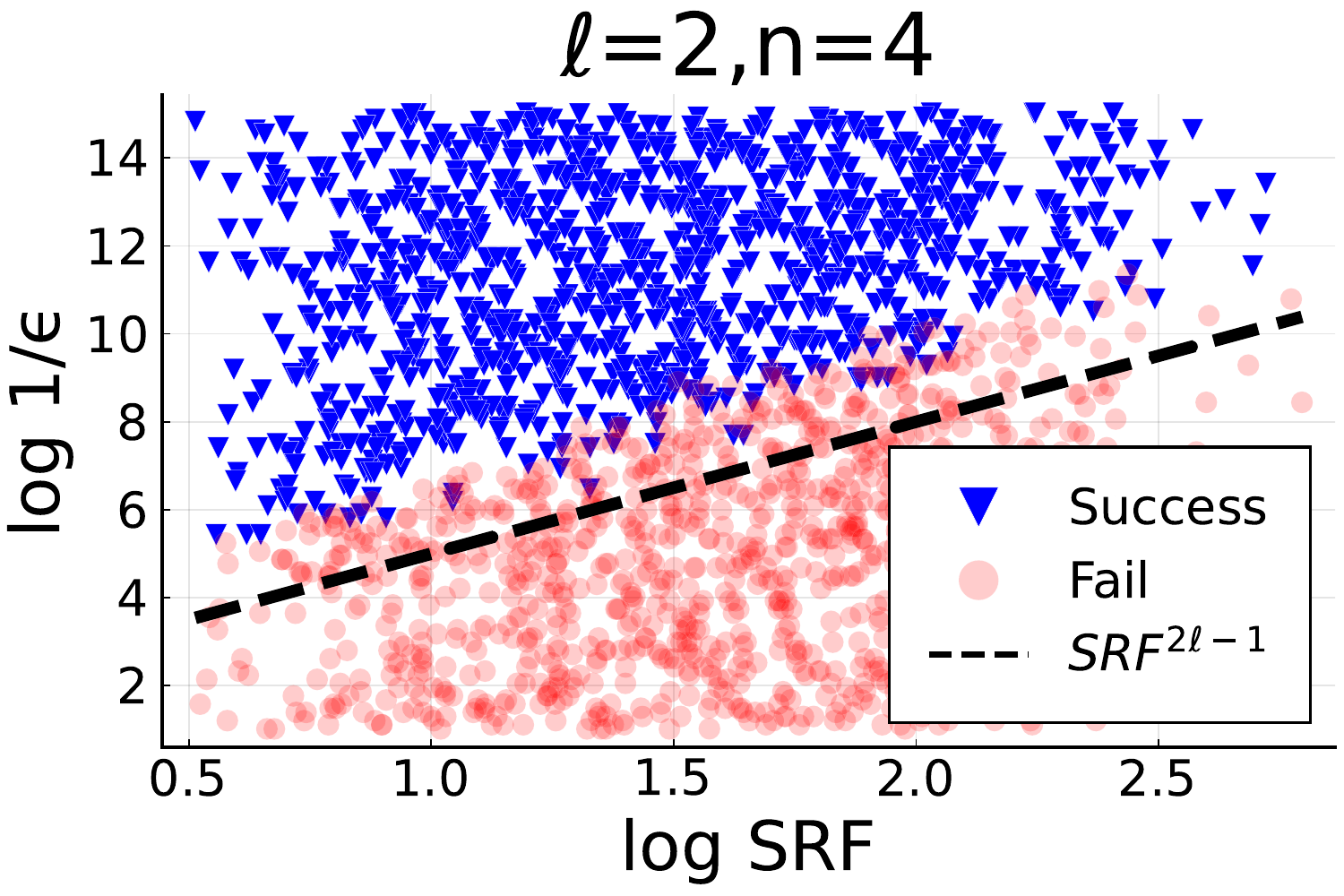}}
%  \vspace{1.5cm}
  \centerline{(b) Recovery SNR threshold.}\medskip
\end{minipage}
\hfill
\begin{minipage}[b]{0.48\linewidth}
  \centering
  \centerline{\includegraphics[width=4.0cm]{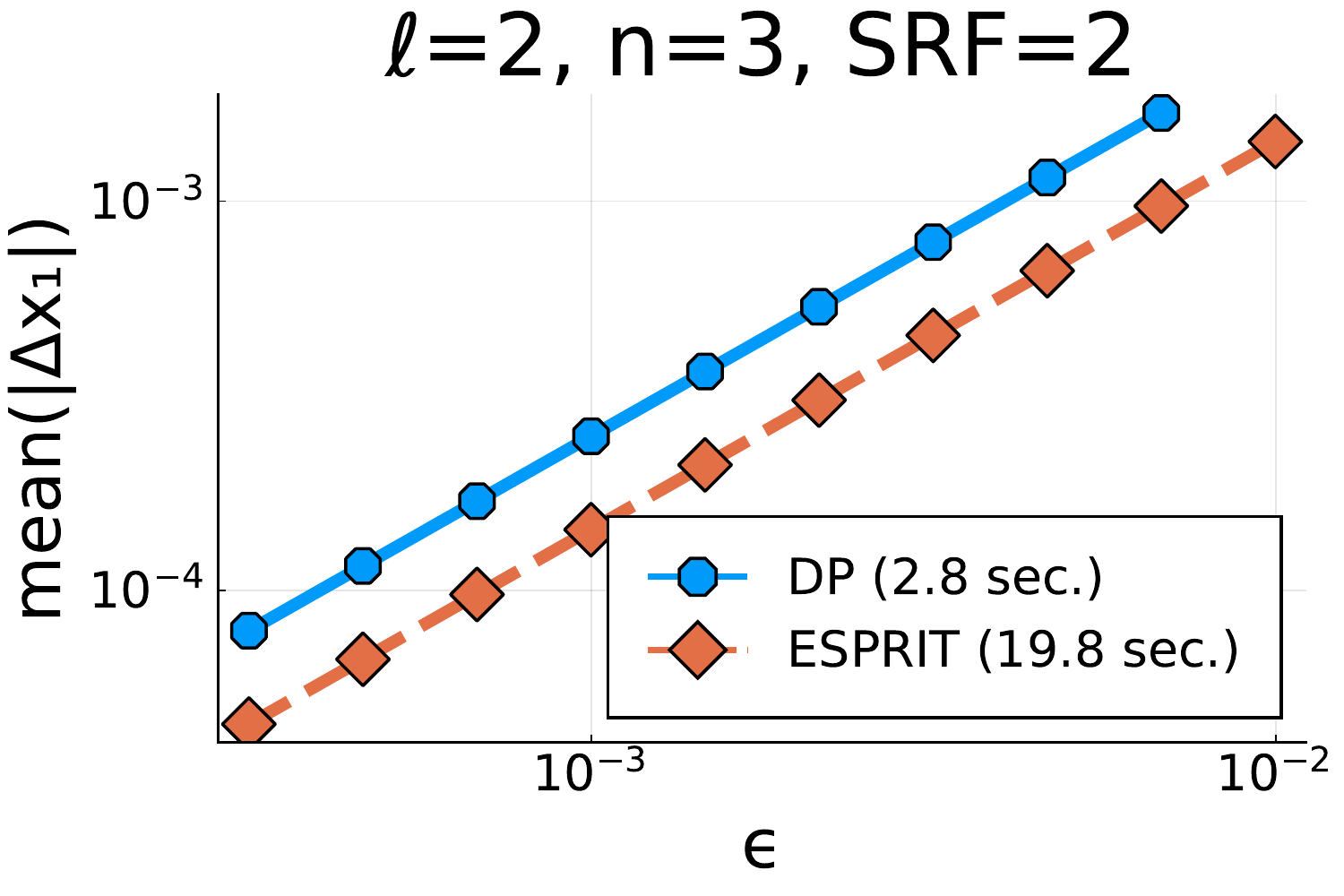}}
%  \vspace{1.5cm}
  \centerline{(c) DPM vs. ESPRIT.}\medskip
\end{minipage}

\caption{\small DPM - asymptotic optimality. (a) For cluster nodes, $\left\{\mathcal{K}_{x,j} \right\}$ (left) scale like ${\srf}^{2\ell-2}$, while the $\left\{\mathcal{K}_{\alpha,j} \right\}$ (right) scale like ${\srf}^{2\ell-1}$ ($\srf=(\Omega\Delta)^{-1}$). For the non-cluster node $j=3$, both $\left\{\mathcal{K}_{x,j} \right\}$ and $\left\{\mathcal{K}_{\alpha,j} \right\}$ are lower bounded by a constant. Here $N_{\lambda}=10$ and number of tests=$300$. (b) Noise threshold for recovery of cluster nodes scales with  ${\srf}^{2\ell-1}$. Here $N_{\lambda}=50$ and number of tests=$2000$. (c) Comparison of accuracy and runtime with the ESPRIT method. While the mean absolute errors (50 tests for each $\epsilon$) scale the same and are comparable, the DPM runs 7 times faster ($N_{\lambda}=50$). }
\label{fig:dp-numerics}
\end{figure}

% \IncMargin{1em}

% \DecMargin{1em}

\appendix

Here we prove \prettyref{thm:correctness-single-cluster}. Recall $\mathcal{J}=\left[\frac{1}{2}\frac{\Omega}{2n-1},\frac{\Omega}{2n-1}\right]$. Note that in our case (a single cluster) each $\lambda\in\mathcal{J}$ is collision-avoiding. Hence, we only need to show that the bins $\{B_j\}_{j=1}^n$ contain $N_{\lambda}$ valid node approximations. We shall use the following technical result.

\begin{prop}\label{prop:pf-main-technical-prop}
    Let $h,\eta>0$ be arbitrary. There exist constants $K_1(n),K_2(n)$ such that for each $h<|c|\leq \eta/6$ and each $\Omega\in\left[{K_1\over\eta},{K_2\over h}\right]$ there exists an interval $I\subset\mathcal{J}$ of length $|I|=\eta^{-1}$ satisfying
    $$
    \forall \lambda\in I,\;\forall k\in\mathbb{Z}\qquad \left|c-\frac{k}{\lambda}\right| > h.
    $$
\end{prop}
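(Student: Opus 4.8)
The plan is to pass to the quantity $c\lambda$ and to keep it well away from the integers on an interval of the required length, using a phase-window argument whose slack is exactly what the factor $6$ buys.

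First I would assume without loss of generality that $c>0$, since the map $(c,k)\mapsto(-c,-k)$ preserves the condition, and rewrite $\left|c-\frac{k}{\lambda}\right|>h$ as $\mathrm{dist}(c\lambda,\mathbb{Z})>h\lambda$. Setting $\mu=c\lambda$ and $\rho=h/c\in(0,1)$, the goal becomes: exhibit a sub-interval of $[A,2A]$, where $A:=c\,\tfrac{\Omega}{2(2n-1)}$, of $\mu$-length $c/\eta$ on which $\mathrm{dist}(\mu,\mathbb{Z})>\rho\mu$. Two bounds, which fix $K_1$ and $K_2$, drive the argument. From $\Omega\ge K_1/\eta$ with $K_1\gtrsim 2n-1$ I get $a:=\tfrac{\Omega}{2(2n-1)}\ge\eta^{-1}$, so an interval of length $\eta^{-1}$ fits inside $\mathcal{J}=[a,2a]$ and $A\ge c/\eta$; from $\Omega\le K_2/h$ with $K_2\lesssim 2n-1$ I get the key bound $A\rho=ah=\tfrac{\Omega h}{2(2n-1)}\le\delta':=\tfrac{K_2}{2(2n-1)}$, chosen so that $2\delta'<\tfrac13$. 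Hence on all of $[A,2A]$ the forbidden radius obeys $\rho\mu\le 2\delta'<\tfrac13$.

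The main case is $A\ge2$. Because $c/\eta\le\tfrac16$, on any interval $I$ of length $\eta^{-1}$ the image $c\lambda$ drifts by at most $\tfrac16$. As $\lambda_0$ runs over $[a,\,2a-\eta^{-1}]$ the value $c\lambda_0$ sweeps a range of length $A-c/\eta\ge 2-\tfrac16>1$, so $\{c\lambda_0\}$ is onto $[0,1)$ and I may choose $\lambda_0$ with $\{c\lambda_0\}=\tfrac{2}{5}$. Then for every $\lambda\in I=[\lambda_0,\lambda_0+\eta^{-1}]$ one has $\{c\lambda\}\in[\tfrac25,\tfrac25+\tfrac16]\subset(\tfrac13,\tfrac23)$, so $\mathrm{dist}(c\lambda,\mathbb{Z})\ge\tfrac13>2\delta'\ge h\lambda$, as required. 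This is precisely the role of the constant $6$: a phase drift of $\tfrac16$ together with a forbidden radius below $\tfrac13$ both fit inside the width-$\tfrac13$ safe band $(\tfrac13,\tfrac23)$.

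The remaining regime $A<2$ is the delicate part, and the hardest sub-case is $\rho\to1$ (that is, $c$ barely exceeding $h$), where almost the entire positive axis is forbidden and the phase window is unreachable. Here I would exploit the interplay $A\rho\le\delta'$: if $\rho$ is close to $1$ then $A\le\delta'/\rho$ is forced small, so that $[A,2A]\subset(0,\tfrac12)$ lies in the always-admissible region where the nearest integer is $0$ and $\mathrm{dist}(\mu,\mathbb{Z})=\mu>\rho\mu$ --- this is exactly where the hypothesis $|c|>h$ (the $k=0$ term) is used. For intermediate $A$ only finitely many integers ($k\le3$) are relevant, the forbidden intervals around them have length below $\tfrac13$, and since $[A,2A]$ spans a factor of two a short target interval cannot miss every admissible gap: once $A$ is large enough to clear the first gaps, $2A$ is large enough that $[A,2A]$ contains an entire later gap of length $\ge\tfrac23\ge c/\eta$. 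A finite check over the bounded band of $A$ then closes the proof.
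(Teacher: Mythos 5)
Your proof is correct in substance, but it takes a genuinely different route from the paper's. The paper gives no self-contained argument at all: it observes that the proposition is a simplified version of \cite[Prop.~F.3]{batenkov2021b} and only indicates how to adapt the intervals and constants of that proof (replacing $I_1$ in case 1, taking $I_5$ in case 2). You instead prove everything from scratch: after rescaling to $\mu=c\lambda$, the claim becomes that $\mathrm{dist}(\mu,\mathbb{Z})>\rho\mu$ (where $\rho=h/c<1$) on some window of $\mu$-length $c/\eta\le\tfrac16$ inside $[A,2A]$ with $A=c\Omega/(2(2n-1))$; the hypothesis $\Omega\le K_2/h$ caps the forbidden radius via $A\rho\le\delta'$, the hypothesis $\Omega\ge K_1/\eta$ makes the window fit inside $\mathcal{J}$, and a trichotomy on $A$ finishes: a fractional-part sweep when $A\ge2$ (this part of your argument is airtight), the trivial regime $2A\le\tfrac12$ where the nearest integer is $0$ and the hypothesis $|c|>h$ is what is used, and a finite check for intermediate $A$. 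Your route buys a self-contained, elementary proof in which one sees exactly where each hypothesis (including the constant $6$) enters; the paper's route buys brevity and constants inherited from an already vetted result.

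One repair is needed, and it is within your freedom since you get to choose $K_2$: the constraint $2\delta'<\tfrac13$ fixed at the outset is too weak for your intermediate case. Concretely, take $A=0.67$ and $\rho$ near its permitted maximum $\delta'/A\approx 1/(6A)\approx 0.248$; then the forbidden interval around $k=1$ is $\left[1/(1+\rho),\,1/(1-\rho)\right]\approx[0.80,1.33]$, so the safe subset of $[A,2A]=[0.67,1.34]$ consists of two pieces of lengths about $0.13$ and $0.01$, both shorter than $c/\eta$ when $c=\eta/6$. Your own description of the intermediate case (forbidden intervals of length below $\tfrac13$, full gaps of length at least $\tfrac23$) implicitly assumes the forbidden radius stays below $\tfrac16$, i.e. $2\delta'\le\tfrac16$; impose that from the start (the $A\ge2$ case is unaffected, since $\tfrac16<\tfrac13$), and then the finite check over $\tfrac14<A<2$ does close --- noting also that for $A$ near $2$ the integer $k=4$ can be relevant, not only $k\le3$. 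With these adjustments of constants, which the statement's quantifiers allow, your argument is complete.
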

\begin{proof}
    This is just a simplified version of \cite[Prop.~F.3]{batenkov2021b}. In the proof, the interval $I_1$ in case 1 may be replaced by any $I\subset\mathcal{J}$ of length $\eta^{-1}$ and appropriate adjustment of the constants $K_1,K_2$; in case 2 the interval $I$ can be taken as $I_5$ itself.
\end{proof}

\begin{proof}[Proof of \prettyref{thm:correctness-single-cluster}]
    Without loss of generality we assume that $x_j\in[-\tilde{h},\tilde{h}]$ for each $j=1,\dots,n$ where $\tilde{h}=\tau \Delta$. Now suppose $\lambda_0\in G$ and introduce the auxiliary parameter $a<\min(1/6,\tau)$. Letting $\epsilon  \lessapprox \Delta^{2n-1}$ be small enough and employing \prettyref{thm:prony-optimality}, the set $X_{\lambda_0}$ in \eqref{eq:Xlambda} can be guaranteed to have form $X_{\lambda_0} = \bigcup_{m\in R(\lambda_0)} X_{\lambda_0,m}$, where 
    $$
    R(\lambda_0)=\biggl\{-\biggl\lfloor \frac{\lambda_0}{2} \biggr\rfloor,\dots, \biggl\lfloor \frac{\lambda_0}{2} \biggr\rfloor \biggr\} 
    $$
   where
    $$
    X_{\lambda_0,m}=\biggl\{ \tilde{x}_{j,\lambda_0} + \frac{m}{\lambda_0} \biggr\}_{j=1}^n,\;|\tilde{x}_{j,\lambda_0}-x_j|\leq a \Delta.
    $$

    Now recall step 6 in \prettyref{alg:DPM}.  We make the following

    \noindent{\bfseries\underline{Genericity assumption:}} The distance from $x_j$ to its closest bin edge is at least $a\Delta$.

    For each $\lambda'\in G$, $|\tilde{x}_{j,\lambda_0}-\tilde{x}_{j,\lambda'}| \leq 2a\Delta < \Delta/3$. Therefore, both $\tilde{x}_{j,\lambda_0}, \tilde{x}_{j,\lambda'}$ must belong to the same bin. In particular, we are guaranteed the existence of $n$ bins containing at least $N_{\lambda}$ elements each.

    Fix $m\in R(\lambda_0)\setminus\{0\}$. The choice of $a$ guarantees $X_{\lambda_0,m} \subset c+\left[-2\tilde{h},2\tilde{h}\right]$ where $c:=\frac{m}{\lambda_0}$. Put $h=6\tilde{h}$. Since $|c| \geqslant \lambda_0^{-1} \geqslant \frac{2n-1}{\Omega}$, the condition $|c|>h$ holds whenever $\srf > \textrm{const}$. Thus we can apply \prettyref{prop:pf-main-technical-prop} with  $\eta=3$ and $h,c$ as above, and conclude that there exists $I_m \subset \mathcal{J}$ of length $|I_m|=1/3$ s.t.
    \begin{equation}\label{eq:far-away}
    \forall \lambda\in I_m,\;\forall k\in\mathbb{Z}:\qquad \biggl| \frac{m}{\lambda_0} - \frac{k}{\lambda} \biggr| > 6\tilde{h}.
    \end{equation}
    By choosing $N_{\lambda}\geq C_1 \Omega$ for sufficiently large $C_1$, we can ensure that there exists $\lambda_m \in I_m \bigcap G$. Therefore, for each $k\in\mathbb{Z}$, \eqref{eq:far-away} implies that
    \begin{align*}
    X_{\lambda_m,k} &\subset \left[-1/2,1/2\right] \setminus \biggl(\frac{m}{\lambda_0} + \left[-2\tilde{h},2\tilde{h}\right]\biggr).
    \end{align*}
    Since $X_{\lambda_0,m} \subset\frac{m}{\lambda_0} + \left[-2\tilde{h},2\tilde{h}\right]$,
    we conclude that $\tilde{x}_{j,\lambda_0}+\frac{m}{\lambda_0}$ and $\tilde{x}_{j,\lambda_m} + \frac{k}{\lambda_m}$ cannot belong to the same bin. In particular, the bin containing $\tilde{x}_{j,\lambda_0}+\frac{m}{\lambda_0}$ contains at most $N_{\lambda}-1$ elements.
    
    Since $\lambda_0$ and $m$ were arbitrary, we have shown that the bins containing $\{\tilde{x}_{j,\lambda}: \lambda \in G\}$ have counts at least $N_{\lambda}$ for each $j=1,\dots,n$, while all other bins have strictly smaller counts. Thus, the former bins will be selected when thresholding the histogram.
\end{proof}

\begin{remark}
    The genericity assumption is a technical and not an essential restriction. \prettyref{alg:DPM} can be easily modified to account for the case that all valid approximations to a node $x_j$ belong to two neighboring bins.
\end{remark}

\bibliographystyle{IEEEbib}
\bibliography{refs}

\end{document}